\title[ ]
{Quasi-Fredholm, Saphar Spectra For Integrated semigroup  }
\subjclass[2000]{47D03, 47A10}
\keywords{Integrated semigroup, generator, Saphar, quasi-Fredholom }
\author[A. TAJMOUATI,  H. BOUA,  M.KARMOUNI  ]
{ A. TAJMOUATI,  H. BOUA,  M.KARMOUNI    }
\address{A. TAJMOUATI,  H. BOUA\newline
 Sidi Mohamed Ben Abdellah University,
 Faculty of Sciences Dhar Al Mahraz, Laboratory of Mathematical Analysis and Applications, Fez, Morocco.}
 \email{abdelaziz.tajmouati@usmba.ac.ma}
\email{hamid.boua@usmba.ac.ma}
\address{M. KARMOUNI \newline
Cadi Ayyad University, Multidisciplinary Faculty, Safi, Morocco.}
\email{med89karmouni@gmail.com}
\subjclass[2010]{47B47, 47B20, 47B10}
\keywords{Integrated semigroup, quasi-Fredholm operator, Saphar spectrum.}
\newtheorem{theorem}{Theorem}[section]
\newtheorem{definition}{Definition}[section]
\newtheorem{remark}{Remark}
\newtheorem{lemma}{Lemma}[section]
\newtheorem{proposition}{Proposition}[section]
\newtheorem{corollary}{Corollary}[section]
\begin{document}
\maketitle

\begin{abstract}
In this paper, we show a spectral inclusion of  integrated semigroups  for Saphar, essentially Saphar and  quasi-Fredholm spectra.
\end{abstract}

\section{Introduction And Preliminaries}
Throughout, $X$ denotes a complex Banach space, let $A$ be a closed linear operator on $X$ with domain  $D(A)$, we denote by $A^*$,  $R(A)$, $N(A)$, $ R^{\infty}(A)=\bigcap_{n\geq0}R(A^n)$, $\sigma(A)$,
 respectively the adjoint, the range, the null space, the hyper-range, the spectrum of $A$.\\
Recall that a  closed operator $A$ is said to be Kato operator or
semi-regular if $R(A)$ is closed
and $N(A)\subseteq R^{\infty}(A)$. Denote by $\rho_{K}(A)$ :
$\rho_{K}(A)=\{\lambda\in\mathbb{C}: A-\lambda I\mbox{  is Kato }
\}$ the Kato resolvent  and
$\sigma_{K}(A)=\mathbb{C}\backslash\rho_{K}(A)$ the Kato spectrum
of $A$. It is well known that $\rho_{K}(A)$ is an open subset of
$\mathbb{C}$.\\
 For subspaces $M$, $N$ of $X$ we write $M\subseteq^{e}N$ ($M$ is essentially contained in $N$) if there exists a finite-dimensional subspace $F\subset X$ such that $M\subseteq N+F$.\\
 A closed operator $S$ is called a generalized inverse of $A$ if $R(A)\subseteq D(S)$,  $R(S)\subseteq D(A)$, $ASA=A$ on $D(A)$  and $SAS=S$ on $D(S)$, wich equivalent to the fact that $R(A)\subseteq D(S)$,  $R(S)\subseteq D(A)$, $ASA=A$ on $D(A)$. \\
 A closed operator  $A$ is called a  Saphar operator if $A$ has a generalized
 inverse and $ N(A)\subseteq R^{\infty}(A)$, which equivalent to the fact that $A$ is
 Kato operator  and has a generalized inverse, see \cite{Mul}.\\
 If we assume in the definition above that $ N(A)\subseteq^{e} R^{\infty}(A)$, $A$ is said to be a essentially Saphar operator.
 The  Saphar and essentially Saphar spectra are defined by $$\sigma_{Sap}(A)=\{\lambda\in \mathbb{C}:\>\> A-\lambda \mbox{ is  not  Saphar} \}.$$
 $$\sigma^{e}_{Sap}(A)=\{\lambda\in \mathbb{C}:\>\> A-\lambda \mbox{ is  not  essentially Saphar} \}.$$
 $\sigma_{Sap}(A)$ is a compact non empty set of $\mathbb{ C}$ and we have $$\partial \sigma(A)\subseteq\sigma_{K}(A)\subseteq\sigma_{Sap}(A)\subseteq \sigma(A)$$
  Integrated semigroups were first defined by Arendt \cite{A} for integer-valued
$\alpha$. Arendt showed that certain natural classes of operators, such as adjoint semigroups
of $C_0$ semigroups on non-reflexive Banach spaces, give rise to integrated
semigroups which are not integrals of $C_0$  semigroups.\\

 A family of bounded linear operators $(S(t))_{t\geq 0}$, on a Banach space X is called an integrated semigroup iff \\
 \begin{enumerate}
   \item  $S(0) = 0$.
   \item  $S(t)$ is strongly continuous in $t\geq 0$.
   \item $S(r)S(t) = \int_0^r(S(\tau + t) - S(\tau))d\tau = S(t) S(r)$.
 \end{enumerate}

The differentation spaces $C^n$, $n \geq 0$, are defined by $C^0 = X$ and
\begin{center}
$C^n = \{x \in  X : S(.)x \in C^n(\mathbb{R}^+; X)\}$
\end{center}
Using this notion,  3) can equivalently be formulated by
$$S(t)x \in C^1 \mbox{ and }  S'(r)S(t) = S(r + t) - S(r)$$
The set $N= \{x\in X; S(t)x=0, \forall t\geq 0\}$ is called the degeneration space of the integrated semigroup $(S(t))_{t\geq 0}$.
 $(S(t))_{t\geq 0}$ is called non-degenerate if $N = \{0\}$ and degenerate otherwise.

The generator $A : D(A) \subseteq X \rightarrow X$ of a non-degenerate integrated semigroup $(S(t))_{t\geq 0}$ is defined as follows: $x \in D(A)$ and $Ax = y$ iff $x \in C^1$ and $S'(t)x-x=S(t)y$ for $t \geq 0 $.

\begin{center}
 $C^2 \subseteq  D(A) \subseteq C^1$ and $Ax=S''(0)x$ for $X \in C^2$. Moreover $AC^2 \subseteq C^1$.
\end{center}
Note that $A$ is a closed linear operator.  \\
$S(t) : C^1 \rightarrow C^2 \subseteq D(A)$ and $AS(t) x = S''(0) S(t) x =S'(t) x - x$. Further $AS(t) x = S(t) Ax$ for all $x \in D(A)$.\\
$\int_0^t S(r)dr$ maps $X$ into $D(A)$ and $A\int_0^t S(r)xdr =S(t)x -tx$.\\
A non-degenerate integrated semigroup is uniquely determined by its generator. \\
Let $u: [0, T)\rightarrow X$ be continuous such that $\int_0^t u(s)ds \in D(A)$   and  $A(\int_0^t u(s)ds) =u(t)$, for $0 \leq t \leq T$. Then $u = 0$ in $[0, T)$. Arendt \cite{A} showed that if $A$
generates $S_t$ as an $n$-times integrated semigroup, then the Abstract Cauchy
Problem
$u'(t) = Au(t), u(0) = x$ has a classical solution for all $x \in D(A^{n+1})$.\\
The aim of this  paper is to established the relationship between the integrated semigroup and its generator, more precisely, we show that
$$\int_0^t e^{t\sigma_{*}(A)}ds \subseteq \sigma_{*}(S(t))$$
where $\sigma_{*}$ run the Saphar, essential Saphar and quasi-Fredholm spectra.
\section{Spectral Inclusions  For Saphar Spectrum}
We start by some lemmas, they  will be needed in the sequel.
\begin{lemma}\label{l1s}
Let $A$ be the generator of a non-degenerate integrated semigroup $(S(t))_{t\geq 0}$, $D_\lambda(t)x=\int_0^t e^{\lambda (t-s)}S(s)xds$. Then, for all $\lambda \in \mathbb{C}$, $t\geq 0$, and $n\in \mathbb{N}$,
\begin{enumerate}
\item $(\int_0^t e^{\lambda s}ds-S(t))x =(\lambda -A)D_\lambda(t)x,   \forall x\in X .$
\item $(\int_0^te^{\lambda s}ds-S(t))x =D_\lambda(t)(\lambda -A) x,  \forall x\in D(A).$
\item $R(\int_0^te^{\lambda s}ds-S(t))^n\subseteq R(\lambda -A)^n$.
\item $N((\lambda -A)^n)\subseteq N(\int_0^te^{\lambda s}ds-S(t))^n$.
\end{enumerate}
\end{lemma}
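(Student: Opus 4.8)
The plan is to establish the two operator identities (1) and (2) by direct computation, and then to obtain (3) and (4) as formal consequences of them together with a few commutation relations.

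For (1), the first difficulty is that $D_\lambda(t)x=\int_0^t e^{\lambda(t-s)}S(s)x\,ds$ cannot be differentiated under the integral sign against $A$, since $S(s)x$ need not lie in $D(A)$ for a general $x\in X$. To circumvent this I would integrate by parts, writing $D_\lambda(t)x=V(t)x+\lambda\int_0^t e^{\lambda(t-s)}V(s)x\,ds$ with $V(s)x:=\int_0^s S(r)x\,dr$. By the recalled properties, $V(s)x\in D(A)$ with $AV(s)x=S(s)x-sx$, which is continuous in $s$; since $A$ is closed it passes through the Bochner integral, so $D_\lambda(t)x\in D(A)$ and $AD_\lambda(t)x=S(t)x-tx+\lambda D_\lambda(t)x-\lambda x\int_0^t se^{\lambda(t-s)}\,ds$. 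A scalar integration by parts gives $t+\lambda\int_0^t se^{\lambda(t-s)}\,ds=\int_0^t e^{\lambda s}\,ds$, and rearranging yields $(\lambda-A)D_\lambda(t)x=(\int_0^t e^{\lambda s}\,ds-S(t))x$.

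For (2), fix $x\in D(A)$ and expand $D_\lambda(t)(\lambda-A)x=\int_0^t e^{\lambda(t-s)}S(s)(\lambda-A)x\,ds$. Replacing $S(s)Ax=AS(s)x=S'(s)x-x$ turns this into $\lambda D_\lambda(t)x+x\int_0^t e^{\lambda(t-s)}\,ds-\int_0^t e^{\lambda(t-s)}S'(s)x\,ds$; integrating the last integral by parts and using $S(0)=0$ rewrites it as $S(t)x+\lambda D_\lambda(t)x$, the $\lambda D_\lambda(t)x$ terms cancel, and what remains is exactly $(\int_0^t e^{\lambda s}\,ds-S(t))x$.

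Finally, write $B:=(\int_0^t e^{\lambda s}\,ds)I-S(t)$. A routine induction (using $S(t)D(A)\subseteq D(A)$, $AS(t)=S(t)A$ on $D(A)$, and identity (1)) shows that $D_\lambda(t)$ maps $D(A^k)$ into $D(A^{k+1})$; also $D_\lambda(t)$ commutes with $S(t)$ (because $S(t)S(s)=S(s)S(t)$), hence with $B$, and, combining (1) and (2), $D_\lambda(t)$ commutes with $\lambda-A$ on $D(A)$. Iterating (1) then gives $B^n=(\lambda-A)^nD_\lambda(t)^n$ on all of $X$, whence $R(B^n)\subseteq R((\lambda-A)^n)$, which is (3); iterating (2) gives $B^n=D_\lambda(t)^n(\lambda-A)^n$ on $D(A^n)$, so $(\lambda-A)^nx=0$ forces $B^nx=0$, which is (4). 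The main obstacle throughout is the domain bookkeeping — showing $D_\lambda(t)x\in D(A)$ for $x\in X$, and that $D_\lambda(t)$ and $S(t)$ respect the filtration $D(A^k)$ — since $A$ is only closed and the Bochner integrals must be handled with care; once these are in place the identities and their iterations are mechanical.
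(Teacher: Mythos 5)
Your proof is correct, but for part (1) it takes a genuinely different route from the paper's. The paper proves (1) by going back to the definition of the generator: it computes $S(r)D_\lambda(t)x$ via the functional equation $S(r)S(s)=\int_0^r(S(\tau+s)-S(\tau))\,d\tau$, shows $D_\lambda(t)x\in C^1$, differentiates in $r$, and reads off from the identity $\frac{d}{dr}S(r)D_\lambda(t)x - D_\lambda(t)x = S(r)y$ that $D_\lambda(t)x\in D(A)$ with $AD_\lambda(t)x = S(t)x+\lambda D_\lambda(t)x-\bigl(\int_0^te^{\lambda s}ds\bigr)x$. You instead integrate by parts to write $D_\lambda(t)x$ in terms of $V(s)x=\int_0^sS(r)x\,dr$, invoke the already-recalled fact $V(s)x\in D(A)$ with $AV(s)x=S(s)x-sx$, and pass the closed operator $A$ through the Bochner integral. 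Your route is shorter and avoids re-deriving generator membership from the semigroup functional equation, at the (mild) cost of the closed-graph/Bochner-integral interchange and the scalar identity $t+\lambda\int_0^tse^{\lambda(t-s)}ds=\int_0^te^{\lambda s}ds$, both of which check out. Part (2) is essentially identical in both treatments. For (3) and (4) the paper gives no argument at all; your iteration $B^n=(\lambda-A)^nD_\lambda(t)^n$ on $X$ and $B^n=D_\lambda(t)^n(\lambda-A)^n$ on $D(A^n)$, with the attendant bookkeeping that $D_\lambda(t)$ and $S(t)$ preserve the scale $D(A^k)$ and that $D_\lambda(t)$ commutes with $\lambda-A$ on $D(A)$, is exactly what is needed and is a welcome addition.
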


\begin{proof}
\begin{enumerate}
\item for all $r$, $t \in [0, +\infty[$  and  $x\in X$ we have :
\begin{eqnarray*}
  S(r)D_\lambda(t)x &=& S(r)\int_0^t e^{\lambda (t-s)}S(s)xds \\
                    &=&\int_0^t e^{\lambda (t-s)}S(r)S(s)xds  \\
                    &=&\int_0^t \int_0^re^{\lambda (t-s)}[S(\tau +s)-S(\tau)]xd\tau ds  \\
                    &=&\int_0^r \int_0^te^{\lambda (t-s)}[S(\tau +s)-S(\tau)]xds d\tau
\end{eqnarray*}
Then, for all  $x\in X$,  $D_\lambda(t)x \in C^1$, and we have
\begin{eqnarray*}
  \frac{d}{dr}S(r)D_\lambda(t)x&=&\int_0^te^{\lambda (t-s)}[S(r +s)-S(r)]xds \\
                               &=&\int_0^te^{\lambda (t-s)}[S(r +s)-S(s)]xds + \int_0^te^{\lambda (t-s)}S(s)xds -\int_0^te^{\lambda (t-s)}S(r)xds\\
                               &=&\int_0^te^{\lambda (t-s)}\frac{d}{ds}[S(s)S(r)]xds -S(r)\int_0^te^{\lambda s}xds +D_\lambda(t)x\\
                               &=&S(r)S(t)x+\lambda S(r)D_\lambda(t)x -S(r)\int_0^te^{\lambda s}xds +D_\lambda(t)x\\
                               &=&S(r)[S(t)+\lambda D_\lambda(t)x -\int_0^te^{\lambda s}xds] +D_\lambda(t)x
\end{eqnarray*}
Therefore $D_\lambda(t)x \in D(A)$ and  $A D_\lambda(t)x=S(t)+\lambda D_\lambda(t)x -\int_0^te^{\lambda s}xds $. \\
Thus,
\begin{center}
$(\int_0^t e^{\lambda s}ds-S(t))x =(\lambda -A)D_\lambda(t)x$
\end{center}
\item  Let $x\in D(A)$, then
\begin{eqnarray*}
  D_\lambda(t)Ax&=&\int_0^t e^{\lambda (t-s)}S(s)Axds \\
                           &=&\int_0^t e^{\lambda (t-s)}(S'(s)x-x)ds \\
                           &=&\int_0^t e^{\lambda (t-s)}S'(s)xds -\int_0^t e^{\lambda s}xds  \\
                           &=&S(s)+\lambda D_\lambda(t)x -\int_0^t e^{\lambda s}xds
\end{eqnarray*}

Hence
\begin{center}
$(\int_0^te^{\lambda s}ds-S(t))x =D_\lambda(t)(\lambda -A) x$
\end{center}
\end{enumerate}

\end{proof}


\begin{lemma}\label{l1}
Let $(S(t))_{t\geq 0}$ be a non-degenerate integrated semigroup on $X$ with generator $A$. For  $\lambda \in \mathbb{C}$ and $t\geq 0$, let $L_{\lambda}(t)x=\int_0^t e^{-\lambda s}D_\lambda(s)x ds$, then:
\begin{enumerate}
\item $L_{\lambda}(t)$ is a bounded linear operator on $X$ .
\item  $\forall x\in X$, $L_{\lambda}(t)x\in D(A)$ and  $(\lambda -A)L_{\lambda}(t) + G_{\lambda}(t) D_\lambda(t)=\phi_{\lambda}(t)I$ with $G_{\lambda}(t)=e^{-\lambda t}I$ and $\phi_{\lambda}(t)=\int_0^t \int_0^{\tau} e^{-\lambda \sigma} x d\sigma d\tau$.
\item The operators $L_{\lambda}(t)$, $G_{\lambda}(t)$, $D_\lambda(t)$ and $(\lambda -A)$ are pairwise commute.
\end{enumerate}
\end{lemma}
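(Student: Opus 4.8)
The plan is to derive all three claims from Lemma \ref{l1s} together with the closedness of $A$, treating every integral of an operator-valued function as a strong (pointwise-in-$x$) integral over the compact interval $[0,t]$.

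For (1), the first observation is that each $D_\lambda(s)$ is a bounded operator, since $x\mapsto D_\lambda(s)x=\int_0^s e^{\lambda(s-u)}S(u)x\,du$ is the strong integral of the strongly continuous, hence locally bounded, family $u\mapsto e^{\lambda(s-u)}S(u)$; moreover $s\mapsto D_\lambda(s)x$ is continuous for each fixed $x$, so by the uniform boundedness principle $\sup_{s\in[0,t]}\|D_\lambda(s)\|<\infty$. Then $L_\lambda(t)x=\int_0^t e^{-\lambda s}D_\lambda(s)x\,ds$ is well defined, linear in $x$, and $\|L_\lambda(t)x\|\le\big(\int_0^t|e^{-\lambda s}|\,\|D_\lambda(s)\|\,ds\big)\|x\|$, giving (1).

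For (2), fix $x\in X$. From the proof of Lemma \ref{l1s}(1) we have $D_\lambda(s)x\in D(A)$ with $AD_\lambda(s)x=S(s)x+\lambda D_\lambda(s)x-\int_0^s e^{\lambda u}x\,du$ for all $s$; in particular $s\mapsto AD_\lambda(s)x$ is continuous on $[0,t]$. Since $A$ is closed and both $s\mapsto e^{-\lambda s}D_\lambda(s)x$ and $s\mapsto e^{-\lambda s}AD_\lambda(s)x$ are continuous, the strong integral $L_\lambda(t)x$ lies in $D(A)$ and $AL_\lambda(t)x=\int_0^t e^{-\lambda s}AD_\lambda(s)x\,ds$. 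Now I rewrite Lemma \ref{l1s}(1) as $(\lambda-A)D_\lambda(s)x=\big(\int_0^s e^{\lambda u}du\big)x-S(s)x$, multiply by $e^{-\lambda s}$, and integrate over $s\in[0,t]$. Pulling $(\lambda-A)$ out of the left-hand integral, and using $e^{-\lambda s}\int_0^s e^{\lambda u}du=\int_0^s e^{-\lambda\sigma}d\sigma$ (change of variable $\sigma=s-u$) so that the first term becomes $\phi_\lambda(t)x$, together with $e^{-\lambda t}D_\lambda(t)x=\int_0^t e^{-\lambda s}S(s)x\,ds$ so that the second term becomes $G_\lambda(t)D_\lambda(t)x$, one obtains $(\lambda-A)L_\lambda(t)x=\phi_\lambda(t)x-G_\lambda(t)D_\lambda(t)x$, which is the stated identity.

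For (3), $G_\lambda(t)=e^{-\lambda t}I$ is scalar and commutes with the other three operators on the relevant domains; the equality $(\lambda-A)D_\lambda(t)=D_\lambda(t)(\lambda-A)$ on $D(A)$ is precisely the combination of parts (1) and (2) of Lemma \ref{l1s}; commutation of $D_\lambda(t)$ with $D_\lambda(s)$ and of $L_\lambda(t)$ with $D_\lambda(s)$ and with $L_\lambda(s)$ follows by writing both sides as iterated strong integrals and invoking $S(u)S(v)=S(v)S(u)$ with Fubini; and for $x\in D(A)$, $L_\lambda(t)(\lambda-A)x=\int_0^t e^{-\lambda s}D_\lambda(s)(\lambda-A)x\,ds=\int_0^t e^{-\lambda s}(\lambda-A)D_\lambda(s)x\,ds=(\lambda-A)L_\lambda(t)x$, the last step again by closedness of $A$. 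The only delicate point throughout is this repeated interchange of the closed operator $A$ with the strong integrals: each time one must check that both the integrand $e^{-\lambda s}D_\lambda(s)x$ and its $A$-image $e^{-\lambda s}AD_\lambda(s)x$ are continuous, hence strongly integrable, in $s$ on $[0,t]$, which is guaranteed by the explicit formula for $AD_\lambda(s)x$ from Lemma \ref{l1s}. Note that working directly from the integral identity of Lemma \ref{l1s} rather than from closed-form expressions avoids any division by $\lambda$, so the case $\lambda=0$ requires no separate treatment.
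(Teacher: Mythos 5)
Your proof is correct, but for the central claim (part (2)) you take a genuinely different route from the paper. The paper never invokes closedness of $A$ directly: it computes $S(r)L_\lambda(t)x$ as an iterated integral, uses the functional equation of the integrated semigroup to show $L_\lambda(t)x\in C^1$, differentiates in $r$, and massages the derivative into the form $S(r)y+L_\lambda(t)x$ so that membership in $D(A)$ and the formula $AL_\lambda(t)x=e^{-\lambda t}D_\lambda(t)x+\lambda L_\lambda(t)x-\phi_\lambda(t)x$ follow straight from the definition of the generator. You instead start from the identity $(\lambda-A)D_\lambda(s)x=\bigl(\int_0^s e^{\lambda u}du\bigr)x-S(s)x$ already proved in Lemma \ref{l1s}, note that both $s\mapsto e^{-\lambda s}D_\lambda(s)x$ and its image under $A$ are continuous, and pass $A$ through the strong integral by Hille's theorem for closed operators; this yields the same identity after the change of variables you indicate. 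Your argument is shorter and reuses Lemma \ref{l1s} instead of repeating the $S(r)$-differentiation computation, at the cost of importing the (standard, but unstated in the paper) fact that a closed operator commutes with Bochner/Riemann integrals of $D(A)$-valued functions whose $A$-image is integrable; the paper's computation is longer but entirely self-contained within the integrated-semigroup calculus. Your treatments of parts (1) and (3) coincide in substance with the paper's (the paper dismisses (1) as obvious, and proves (3) by exactly the Fubini/commutativity and Lemma \ref{l1s} arguments you sketch), and you correctly cover all six pairs in (3). You also rightly observe that no division by $\lambda$ occurs anywhere, so $\lambda=0$ needs no special care.
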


\begin{proof}
\begin{enumerate}
\item  obvious.
\item For all $ r \geq 0$ we have :
\begin{eqnarray*}
     S(r)L_{\lambda}(t)x &=&\int_0^t e^{-\lambda \tau}S(r)D_\lambda(\tau)x d\tau\\
                         &=&\int_0^t \int_0^{\tau} e^{-\lambda \sigma} \int_0^r [S(u+\sigma)-S(u)]x du d\sigma d\tau \\
                         &=&\int_0^t \int_0^{\tau} \int_0^r e^{-\lambda \sigma} [S(u+\sigma)-S(u)]x du d\sigma d\tau \\
                         &=&\int_0^r \int_0^t \int_0^{\tau} e^{-\lambda \sigma} [S(u+\sigma)-S(u)]x d\sigma d\tau du
\end{eqnarray*}
Therefore, for all $x\in X$, $L_{\lambda}(t)x \in C^1$ and
\begin{eqnarray*}
 \frac{d}{dr}S(r)L_{\lambda}(t)x &=&\int_0^t \int_0^{\tau} e^{-\lambda \sigma} [S(r+\sigma)-S(r)]x d\sigma d\tau \\
                                 &=&\int_0^t \int_0^{\tau} e^{-\lambda \sigma} [S(r+\sigma)-S(\sigma)]x d\sigma d\tau + L_{\lambda}(t)x -S(r)\phi_{\lambda}(t)\\
                                 &=&\int_0^t \int_0^{\tau} e^{-\lambda \sigma} \frac{d}{d\sigma}S(\sigma)S(r)x d\sigma d\tau + L_{\lambda}(t)x -S(r)\phi_{\lambda}(t) \\
                                 &=&S(r)[e^{-\lambda t}D_{\lambda}(t)x + \lambda L_{\lambda}(t)x -\phi_{\lambda}(t)x]+L_{\lambda}(t)x
\end{eqnarray*}
Therefore, $AL_{\lambda}(t)x=e^{-\lambda t}D_{\lambda}(t)x + \lambda L_{\lambda}(t)x -\phi_{\lambda}(t)x $.\\
$(\lambda -A)L_{\lambda}(t) + G_{\lambda}(t) D_\lambda(t)=\phi_{\lambda}(t)I$ with $G_{\lambda}(t)=e^{-\lambda t}I.$
\item For all  $t \geq 0$,  $L_\lambda(t)$ and  $D_\lambda(t)$ commuting.\newline
Indeed, for  $t,s \geq 0$ we have  :
\begin{eqnarray*}
   D_{\lambda}(t)D_{\lambda}(s)x &=&\int_0^t e^{\lambda (t-u)}S(u)D_{\lambda}(s) x du  \\
                                 &=&\int_0^t e^{\lambda (t-u)}S(u)\int_0^s e^{\lambda (s-v)}S(v) x dv du \\
                                 &=& \int_0^t \int_0^s e^{\lambda (t-u)} e^{\lambda (s-v)}S(u)S(v) x dv du \\
                                 &=& \int_0^s e^{\lambda (s-v)}S(v) \int_0^t e^{\lambda (t-u)}S(u) x du dv \\
                                 &=&  D_{\lambda}(s)D_{\lambda}(t)x
\end{eqnarray*}
Therefore:
\begin{eqnarray*}
   L_{\lambda}(t)D_{\lambda}(t)x &=& \int_0^t e^{-\lambda u}D_{\lambda}(u)D_{\lambda}(t) x du  \\
                                 &=& \int_0^t e^{-\lambda u}D_{\lambda}(t)D_{\lambda}(u) x du  \\
                                 &=& D_{\lambda}(t) \int_0^t e^{-\lambda u}D_{\lambda}(u) x du \\
                                 &=& D_{\lambda}(t)L_{\lambda}(t)x
\end{eqnarray*}
For all $x\in D(A)$ we have:
\begin{eqnarray*}
   L_\lambda(t)(\lambda - A)x&=& \int_0^t e^{-\lambda s}D_\lambda(s)(\lambda - A)x ds\\
                             &=& \int_0^t e^{-\lambda s}(e^{\lambda s}-S(s))x ds \\
                             &=& \phi_{\lambda}(t)x-\int_0^t e^{-\lambda s}S(s)x ds \\
                             &=& \phi_{\lambda}(t)x - G_{\lambda}(t)D_\lambda(t)x \\
                             &=& (\lambda - A) L_\lambda(t)x
\end{eqnarray*}
For all  $x\in D(A)$   $(\lambda -A)G_{\lambda}(t)x= G_{\lambda}(t)(\lambda -A)x$  trivial\\
For all  $x\in D(A)$   $(\lambda -A)D_\lambda(t)x=D_\lambda(t)(\lambda -A)x$  see lemma \ref{l1s}
\end{enumerate}
\end{proof}


 \begin{lemma}\label{2}
Let $(S(t))_{t\geq 0}$ be a non-degenerate integrated semigroup with generator $A$. Then for all  $t>0$ we have:
\begin{center}
 $\int_0^t e^{\lambda t} ds -S(t)$ has a generalized inverse $\Longrightarrow$  $\lambda- A$ has a generalized inverse.
\end{center}
\end{lemma}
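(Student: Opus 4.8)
The plan is to build a generalized inverse of $\lambda-A$ directly from one of $B:=\bigl(\int_0^t e^{\lambda s}\,ds\bigr)I-S(t)$, feeding on the two operator identities already proved. I would first fix notation: write $T=\lambda-A$, $D=D_\lambda(t)$, $L=L_\lambda(t)$, $G=e^{-\lambda t}I$, and observe that $\phi_\lambda(t)$ is a \emph{scalar} multiple of $I$, say $\phi_\lambda(t)=c\,I$ with $c=\int_0^t\int_0^{\tau}e^{-\lambda\sigma}\,d\sigma\,d\tau\neq 0$ (for instance $c=t^2/2$ when $\lambda=0$). By Lemma~\ref{l1s}, $B=TD$ on $X$ and $B=DT$ on $D(A)$; by Lemma~\ref{l1}, $TL=cI-GD$ on $X$ and the operators $T,D,L,G$ pairwise commute. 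Since a bounded operator possessing a generalized inverse is relatively regular, we may also fix a \emph{bounded} operator $C$ with $BCB=B$.

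Then I would propose the explicit candidate
\[
S:=\frac1c\,L+\frac{1}{c}\,G\,D\,C\,D=\frac1c\,L+\frac{e^{-\lambda t}}{c}\,DCD .
\]
This $S$ is bounded, hence closed; its range lies in $D(A)$ because $R(L)\subseteq D(A)$ and $R(D)\subseteq D(A)$; and $R(T)\subseteq X=D(S)$. So by the characterization of generalized inverses recalled in the preliminaries it suffices to verify $(\lambda-A)S(\lambda-A)=\lambda-A$ on $D(A)$ (one may then pass to $S(\lambda-A)S$ if the symmetric identity $S(\lambda-A)S=S$ is also wanted). The whole point is this verification: for $x\in D(A)$ one has $STx\in D(A)$, and applying $T$ and substituting in turn $TL=cI-GD$, then $DTx=Bx$ and $TD=B$, and finally $BCB=B$, gives
\[
(\lambda-A)S(\lambda-A)x=\frac1c\bigl(cTx-GDTx\bigr)+\frac{e^{-\lambda t}}{c}\,TDCDTx=Tx-\frac{e^{-\lambda t}}{c}Bx+\frac{e^{-\lambda t}}{c}BCBx=Tx,
\]
the two $B$-terms cancelling exactly because of the coefficient $e^{-\lambda t}/c$ in the second summand of $S$.

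Hence $\lambda-A$ has a generalized inverse, which is the assertion. The main obstacle here is accuracy rather than depth: one has to (i) recognise the correct shape of $S$ — the cancellation is engineered by the choice of coefficients — and (ii) be careful with domains, checking at each step that the vector to which the unbounded $T$ is applied really lies in $D(A)$ (this is exactly where $R(L),R(D)\subseteq D(A)$ and $DTx=Bx$ on $D(A)$ are used), as well as recording that $\phi_\lambda(t)$ is a nonzero scalar so that the division by $c$ is legitimate.
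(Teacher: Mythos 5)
Your argument is correct and is essentially the paper's own proof: the authors use the same two identities to derive $\phi_\lambda(t)(\lambda-A)=(\lambda-A)\bigl[L_\lambda(t)+D_\lambda(t)RD_\lambda(t)G_\lambda(t)\bigr](\lambda-A)$, which after division by the scalar $\phi_\lambda(t)$ yields exactly your candidate $S$. The one point worth recording is the one you already flag — that $\phi_\lambda(t)\neq 0$ must be justified for the division to be legitimate — which the paper itself passes over in silence.
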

\begin{proof}
Suppose that  $\int_0^te^{\lambda s}ds-S(t)$ has a generalized inverse then there exists a $R\in \mathcal{B}(X)$ such that :
$$(\int_0^te^{\lambda s}ds-S(t))R(\int_0^te^{\lambda s}ds-S(t))=\int_0^te^{\lambda s}ds-S(t)$$

According to  lemma \ref{l1}, we have  $(\lambda -A)F_{\lambda}(t) + G_{\lambda}(t) D_\lambda(t)=\phi_{\lambda}(t)I$, then:

\begin{eqnarray*}
  \phi_{\lambda}(t) (\lambda- A) &= & (\lambda - A)F_{\lambda}(t)(\lambda-A)+ D_{\lambda}(t)G_{\lambda}(t)(\lambda-A) \\
                &= & (\lambda - A)F_{\lambda}(t)(\lambda-A)+ (\lambda-A)D_{\lambda}(t)G_{\lambda}(t)\\
                &= & (\lambda - A)F_{\lambda}(t)(\lambda-A)+ (\int_0^te^{\lambda s}ds-S(t))G_{\lambda}(t)  \\
                &= & (\lambda - A)F_{\lambda}(t)(\lambda-A)+ (\int_0^te^{\lambda s}ds-S(t))R(\int_0^te^{\lambda s}ds-S(t))G_{\lambda}(t)\\
                &= & (\lambda - A)F_{\lambda}(t)(\lambda-A)+ (\lambda-A)D_{\lambda}(t)R(\lambda-A)D_{\lambda}(t)G_{\lambda}(t)\\
                &= & (\lambda - A)F_{\lambda}(t)(\lambda-A)+ (\lambda-A)D_{\lambda}(t)R D_{\lambda}(t)G_{\lambda}(t)(\lambda-A)\\
                &= &  (\lambda - A)[F_{\lambda}(t)+ D_{\lambda}(t)R D_{\lambda}(t)G_{\lambda}(t)](\lambda-A)
\end{eqnarray*}

hence $\lambda-A$ has a generalized inverse.
\end{proof}

\begin{theorem}
Let $(S(t))_{t\geq 0}$ be a non-degenerate integrated semigroup with generator $A$. Then for all $t> 0$:
\begin{center}

$\int_0^t e^{t\sigma_{Sap}(A)}ds \subseteq \sigma_{Sap}(S(t)),\>\>\>\>\>\>
   \int_0^t e^{t\sigma^{e}_{Sap}(A)}ds \subseteq \sigma^{e}_{Sap}(S(t))$
\end{center}
\end{theorem}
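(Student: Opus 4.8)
The plan is to prove the spectral inclusion by contraposition: I assume $\lambda \in \sigma_{Sap}(A)$ (respectively $\sigma^{e}_{Sap}(A)$) and that the corresponding point $\mu := \int_0^t e^{\lambda s}\,ds = (e^{\lambda t}-1)/\lambda$ (with $\mu = t$ when $\lambda = 0$) is \emph{not} in $\sigma_{Sap}(S(t))$, and derive a contradiction. So suppose $\int_0^t e^{\lambda s}\,ds - S(t)$ is a Saphar operator. By definition this means it is a Kato operator (closed range, $N \subseteq R^{\infty}$) and possesses a generalized inverse.

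From Lemma~\ref{2} we immediately get that $\lambda - A$ has a generalized inverse, so it only remains to verify the Kato condition $N(\lambda - A) \subseteq R^{\infty}(\lambda - A)$ in the Saphar case, or $N(\lambda - A) \subseteq^{e} R^{\infty}(\lambda - A)$ in the essentially Saphar case. Here is where I would lean on Lemma~\ref{l1s}: part (4) gives $N((\lambda - A)^n) \subseteq N((\int_0^t e^{\lambda s}\,ds - S(t))^n)$ for all $n$, and part (3) gives $R((\int_0^t e^{\lambda s}\,ds - S(t))^n) \subseteq R((\lambda - A)^n)$. Using the latter, $R^{\infty}(\int_0^t e^{\lambda s}\,ds - S(t)) \subseteq R^{\infty}(\lambda - A)$. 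Since $\int_0^t e^{\lambda s}\,ds - S(t)$ is Kato, $N(\int_0^t e^{\lambda s}\,ds - S(t)) \subseteq R^{\infty}(\int_0^t e^{\lambda s}\,ds - S(t)) \subseteq R^{\infty}(\lambda - A)$. Combined with part (4) for $n=1$, namely $N(\lambda - A) \subseteq N(\int_0^t e^{\lambda s}\,ds - S(t))$, we conclude $N(\lambda - A) \subseteq R^{\infty}(\lambda - A)$. Hence $\lambda - A$ is Kato and has a generalized inverse, i.e. $\lambda - A$ is Saphar, contradicting $\lambda \in \sigma_{Sap}(A)$.

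For the essentially Saphar statement the same argument runs, replacing ``$\subseteq$'' by ``$\subseteq^{e}$'' at the null-space step: from $N(\int_0^t e^{\lambda s}\,ds - S(t)) \subseteq^{e} R^{\infty}(\int_0^t e^{\lambda s}\,ds - S(t)) \subseteq R^{\infty}(\lambda - A)$ and $N(\lambda - A) \subseteq N(\int_0^t e^{\lambda s}\,ds - S(t))$ one deduces $N(\lambda - A) \subseteq^{e} R^{\infty}(\lambda - A)$; together with the generalized inverse from Lemma~\ref{2} this makes $\lambda - A$ essentially Saphar. Note that the change of variable identifying $\int_0^t e^{t\sigma_{Sap}(A)}\,ds$ with the set $\{\int_0^t e^{\lambda s}\,ds : \lambda \in \sigma_{Sap}(A)\}$ should be made explicit, since the theorem is phrased with this shorthand.

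The main obstacle I anticipate is the Kato (closed-range / semi-regular) bookkeeping rather than the generalized-inverse part, which is handed to us by Lemma~\ref{2}. Specifically, one must be careful that Lemma~\ref{l1s}(3) truly yields $R^{\infty}(\int_0^t e^{\lambda s}\,ds - S(t)) \subseteq R^{\infty}(\lambda - A)$ by intersecting over all $n$, and that this inclusion, fed into the Kato property of $\int_0^t e^{\lambda s}\,ds - S(t)$, suffices — we do \emph{not} need closedness of $R(\lambda - A)$ separately because the existence of a generalized inverse already forces $R(\lambda - A)$ to be closed and complemented. I would double-check that last point (a closed operator with a bounded generalized inverse has closed range), since it is the linchpin that lets us avoid a direct closed-range estimate for $\lambda - A$.
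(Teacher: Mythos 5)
Your proposal is correct and follows essentially the same route as the paper: deduce the generalized inverse of $\lambda-A$ from Lemma \ref{2}, then chain $N(\lambda -A) \subseteq N(\int_0^te^{\lambda s}ds-S(t)) \subseteq R^{\infty}(\int_0^te^{\lambda s}ds-S(t)) \subseteq R^{\infty}(\lambda -A)$ via Lemma \ref{l1s}(3)--(4), with the same chain modulo a finite-dimensional subspace for the essentially Saphar case. Your closing worry about closed range is already absorbed by the paper's stated equivalence that Saphar means ``generalized inverse plus $N(A)\subseteq R^{\infty}(A)$,'' so no separate closed-range argument is needed.
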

\begin{proof}
Assume that $\int_0^te^{\lambda s}ds-S(t)$ is a Saphar operator, then $\int_0^te^{\lambda s}ds-S(t)$ has a generalized inverse and $N(\int_0^te^{\lambda s}ds-S(t))\subseteq R(\int_0^te^{\lambda s}ds-S(t))$. By  Lemma \ref{2},  $\lambda- A$ has a generalized inverse,
and  we have : $$N(\lambda -A) \subseteq N(\int_0^te^{\lambda s}ds-S(t)) \subseteq R^{\infty}(\int_0^te^{\lambda s}ds-S(t)) \subseteq R^{\infty}(\lambda -A)$$ Therefore $\lambda- A$ is a Saphar operator.\\
Let $M$ a   finite dimensional subspace of $X$.
We have, $$N(\lambda -A) \subseteq N(\int_0^te^{\lambda s}ds-S(t)) \subseteq R^{\infty}(\int_0^te^{\lambda s}ds-S(t))+M \subseteq R^{\infty}(\lambda -A)+M$$ Hence  $\int_0^te^{\lambda s}ds-S(t)$ is  essentially Saphar implies that $\lambda-A$ is.
\end{proof}

\section{Spectral Inclusion For Quasi-Fredholm Spectrum }
Recall from \cite{lab} some definitions:
\begin{definition}
Let $T$  be a closed linear operator on $X$ and let
$$\Delta(T)=\{n\in\mathbb{N}, \forall m\geq n, R(T^n)\cap N(T)=R(T^m)\cap N(T)\}$$
The degree of stable  iteration $dis(T)$ of $T$ is defined as $dis(T)=inf \Delta(T)$ which $dis(T)=\infty$ if $\Delta(T)=\emptyset$.
\end{definition}

\begin{definition}
Let $T$  be a closed linear operator on $X$. $T$ is called a quasi-Fredholm operator of degree $d$ if there exists an integer  $d\in \mathbb{N}$ such that :
\begin{enumerate}
  \item $dis(T)=d$;
  \item $R(T^n)$ is closed in $X$ for all $n\geq d$;
  \item $R(T)+N(T^n)$ is closed in $X$ for all $n\geq d$.
\end{enumerate}
\end{definition}

The quasi-Fredholm spectrum is defined by :
$$\sigma_{qF}(T)=\{\lambda\in\mathbb{C}: T-\lambda I\mbox{  is not  a quasi-Fredholm }\}$$
\begin{proposition}\label{b}
Let $(S(t))_{t\geq 0}$ be a non-degenerate integrated semigroup with generator $A$. Then
\begin{center}
$dis(\lambda - A)\leq dis(\int_0^te^{\lambda s}ds-S(t))$
\end{center}
\end{proposition}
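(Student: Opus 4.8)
The plan is to abbreviate $T:=\lambda-A$ and $B:=\int_0^t e^{\lambda s}\,ds-S(t)$, and to reduce the inequality to the \emph{set} inclusion $\Delta(B)\subseteq\Delta(T)$; indeed, since $dis(\cdot)=\inf\Delta(\cdot)$, the infimum over the larger index set $\Delta(T)$ is no larger than the one over $\Delta(B)$ (and if $\Delta(B)=\emptyset$ there is nothing to prove). The ingredients are all in Lemmas \ref{l1s} and \ref{l1}: $B=T\,D_\lambda(t)$ on $X$ and $B=D_\lambda(t)\,T$ on $D(A)$, whence $B^n=T^nD_\lambda(t)^n=D_\lambda(t)^nT^n$ after the routine domain check (one uses $D_\lambda(t)x\in C^1\subseteq D(A)$, exactly as in the proof of Lemma \ref{l1s}); the inclusions $R(B^n)\subseteq R(T^n)$ and $N(T^n)\subseteq N(B^n)$; the identity $(\lambda-A)L_\lambda(t)+G_\lambda(t)D_\lambda(t)=\phi_\lambda(t)I$ with $G_\lambda(t)=e^{-\lambda t}I$ and $\phi_\lambda(t)$ a scalar; and the fact that $L_\lambda(t),G_\lambda(t),D_\lambda(t),T$ pairwise commute.

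The crucial step is to show that $D_\lambda(t)$ acts on $N(T)$ as multiplication by the scalar $\mu:=e^{\lambda t}\phi_\lambda(t)$. If $Tx=0$ (so $x\in D(A)$), evaluating the identity of Lemma \ref{l1}(2) at $x$ and using Lemma \ref{l1}(3) gives $\phi_\lambda(t)x=L_\lambda(t)Tx+e^{-\lambda t}D_\lambda(t)x=e^{-\lambda t}D_\lambda(t)x$, so $D_\lambda(t)x=\mu x$; moreover $N(T)$ is $D_\lambda(t)$-invariant because $D_\lambda(t)$ commutes with $T$. Granting $\mu\neq0$, this yields the transfer
$$R(T^n)\cap N(T)\subseteq R(B^n)\cap N(B)\qquad(n\in\mathbb N):$$
for $x=T^ny\in N(T)$ one iterates $D_\lambda(t)x=\mu x$ to get $D_\lambda(t)^nx=\mu^nx$, hence $x=\mu^{-n}D_\lambda(t)^nx=\mu^{-n}D_\lambda(t)^nT^ny=\mu^{-n}B^ny\in R(B^n)$, while $x\in N(T)\subseteq N(B)$ by Lemma \ref{l1s}(4); combined with $R(B^n)\subseteq R(T^n)$ one even gets $R(T^n)\cap N(T)=R(B^n)\cap N(T)$.

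The last step is the deduction $\Delta(B)\subseteq\Delta(T)$. Let $n\in\Delta(B)$ and $m\geq n$; since $R(T^m)\cap N(T)\subseteq R(T^n)\cap N(T)$ always holds, it suffices to take $x\in R(T^n)\cap N(T)$ and produce $x\in R(T^m)$. By the transfer step $x\in R(B^n)\cap N(B)=R(B^m)\cap N(B)$ (because $n\in\Delta(B)$), hence $x\in R(B^m)\subseteq R(T^m)$ by Lemma \ref{l1s}(3). Thus $n\in\Delta(T)$, so $\Delta(B)\subseteq\Delta(T)$ and $dis(T)=\inf\Delta(T)\leq\inf\Delta(B)=dis(B)$.

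I expect the main obstacle to be precisely the scalar-multiplication step together with the implicit hypothesis $\mu\neq0$, i.e. $\phi_\lambda(t)\neq0$: for real $\lambda$ one checks $\phi_\lambda(t)>0$, but for complex $\lambda$ the entire function $\lambda\mapsto\phi_\lambda(t)$ can vanish, and at such $\lambda$ the argument above fails to transfer ranges, so one would need either an extra hypothesis or a separate treatment there. A secondary, purely bookkeeping, difficulty is the domain juggling for the unbounded operator $T=\lambda-A$ against the bounded operators $D_\lambda(t),L_\lambda(t)$ and in forming $B^n=T^nD_\lambda(t)^n$, but this goes exactly as in the proofs of Lemmas \ref{l1s} and \ref{l1}.
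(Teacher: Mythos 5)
Your argument is essentially the paper's, reorganized more transparently. The paper's own proof also reduces everything to the Bezout-type identity of Lemma \ref{l1}: it asserts the existence of operators $F_d(t),G_d(t)$ with $(\lambda-A)^dF_d(t)+D_\lambda^d(t)G_d(t)=I$, applies this to a preimage of $y=(\lambda-A)^dx\in N(\lambda-A)$, and uses the commutation relations together with $R\bigl((\int_0^te^{\lambda s}ds-S(t))^n\bigr)\subseteq R((\lambda-A)^n)$ and $N(\lambda-A)\subseteq N(\int_0^te^{\lambda s}ds-S(t))$ to land back in $R((\lambda-A)^n)\cap N(\lambda-A)$. Your observation that $D_\lambda(t)$ acts on $N(\lambda-A)$ as the scalar $\mu=e^{\lambda t}\phi_\lambda(t)$ is precisely what that identity yields when restricted to $N(\lambda-A)$ (the term $(\lambda-A)L_\lambda(t)x=L_\lambda(t)(\lambda-A)x$ dies), and your packaging via $\Delta(B)\subseteq\Delta(T)$ is equivalent to the paper's case analysis on $d=dis(\int_0^te^{\lambda s}ds-S(t))$. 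So this is the same proof, done more carefully.

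The obstruction you flag is genuine, and it is not resolved by the paper either: the normalized identity $(\lambda-A)^dF_d(t)+D_\lambda^d(t)G_d(t)=I$ can only be manufactured from $(\lambda-A)L_\lambda(t)+e^{-\lambda t}D_\lambda(t)=\phi_\lambda(t)I$ by dividing by a power of $\phi_\lambda(t)=\int_0^t\int_0^\tau e^{-\lambda\sigma}\,d\sigma\,d\tau=(\lambda t-1+e^{-\lambda t})/\lambda^2$, and the entire function $z\mapsto e^{-z}-1+z$ has infinitely many nonzero roots, so for each $t>0$ there exist complex $\lambda$ with $\phi_\lambda(t)=0$ at which both your transfer step and the paper's claimed identity break down. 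You have therefore located an unaddressed gap in the published argument rather than introduced a new one; repairing it would require either excluding such $\lambda$ or a different device at those points. One small slip in your write-up: $C^1$ is not contained in $D(A)$ (the paper records $C^2\subseteq D(A)\subseteq C^1$); the correct justification that $D_\lambda(t)x\in D(A)$ for every $x\in X$ is the explicit computation in the proof of Lemma \ref{l1s}(1), and that is what you need to form $(\int_0^te^{\lambda s}ds-S(t))^n=D_\lambda(t)^n(\lambda-A)^n$ on $D(A^n)$.
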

\begin{proof}
If $dis(\int_0^te^{\lambda s}ds-S(t))= + \infty$ obvious.\\\\
If $dis(\int_0^te^{\lambda s}ds-S(t))= d\in\mathbb{N^{*}}$ then for all $n\geq d$, we have :\\
$R((\int_0^te^{\lambda s}ds-S(t))^n)\cap N((\int_0^te^{\lambda s}ds-S(t)))=R((\int_0^te^{\lambda s}ds-S(t))^d)\cap N(\int_0^te^{\lambda s}ds-S(t))$
Then for all $n\geq d$, we have :\\
$$R((\lambda- A)^n)\cap N(\lambda- A)=R((\lambda- A)^d)\cap N(\lambda- A)$$
 Indeed, let $y\in R((\lambda- A)^d)\cap N(\lambda- A)$ there exists $x\in X$ such that $y=(\lambda- A)^{d}x$ and by lemma \ref{l1}, we can show that  there exists two operators  $F_{d}(t)$ and $G_{d}(t)$ such that
$$(\lambda- A)^{d} F_{d}(t)+ D^{d}_{\lambda}(t)G_{d}(t)=I$$
implies that $(\lambda-A)^{d}x=(\lambda- A)^{2d} F_{d}(t)x+ (\int_0^te^{\lambda s}ds-S(t))^d G_{d}(t)x$, since $y\in N(\lambda-A)$,
$y=(\lambda-A)^{d}x=(\int_0^te^{\lambda s}ds-S(t))^n z= (\lambda-A)^{n} D_{\lambda}^{n}(t)z \in R((\lambda- A)^n)\cap N(\lambda- A)$
therefore, $dis(\lambda - A)\leq d$.\\\\
If $d=0$, for all $n\geq d$, we have :\\
$R((\int_0^te^{\lambda s}ds-S(t))^n)\cap N((\int_0^te^{\lambda s}ds-S(t)))= N(\int_0^te^{\lambda s}ds-S(t))$, then\\
 $\forall n\in\mathbb{N}$ $N((\int_0^te^{\lambda s}ds-S(t)))\subset R((\int_0^te^{\lambda s}ds-S(t))^n)$, then
 $N(\lambda-A)\subset N((\int_0^te^{\lambda s}ds-S(t)))\subset R((\int_0^te^{\lambda s}ds-S(t))^n)\subset R((\lambda-A)^n)$ hence
 $N(\lambda-A)\cap R((\lambda-A)^n)=N(\lambda-A)\cap R((\lambda-A)^0)$, therefore $dis(\lambda - A)=0$
 \end{proof}

 \begin{proposition}
Let $(S(t))_{t\geq 0}$ be  a non-degenerate integrated semigroup on $X$ with  generator $A$.

 If $R((\int_0^te^{\lambda s}ds-S(t))^n)$  is closed for all $n\geq d$, then $R( (\lambda-A)^n)$ is closed for all $n\geq d$

\end{proposition}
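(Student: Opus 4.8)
The plan is to transfer closedness of $R\big((\int_0^t e^{\lambda s}ds-S(t))^n\big)$ to $R((\lambda-A)^n)$ by combining the factorization $\int_0^t e^{\lambda s}ds-S(t)=(\lambda-A)D_\lambda(t)$ from Lemma \ref{l1s}(1) with a Bezout-type identity coming from Lemma \ref{l1}. Write $T=\lambda-A$, $D=D_\lambda(t)$ and $B=\int_0^t e^{\lambda s}ds-S(t)$. First I would record the two structural facts needed: (i) since $D$ maps $X$ into $D(A)$ and commutes with $A$, one has $D^n\colon X\to D(A^n)$ and $B^n=T^nD^n=D^nT^n$ on the relevant domains; and (ii) raising the identity $(\lambda-A)L_\lambda(t)+G_\lambda(t)D_\lambda(t)=\phi_\lambda(t)I$ of Lemma \ref{l1}(2) to the power $2n-1$ and expanding by the binomial theorem (legitimate by the commutativity in Lemma \ref{l1}(3)) produces bounded operators $\Phi,\Psi$ on $X$, commuting with $A$ and with $D$, with $R(\Phi)\subseteq D(A^n)$, such that $T^n\Phi+D^n\Psi=I$. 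This is precisely the type of identity already used, for $d$ in place of $n$, in the proof of Proposition \ref{b}.

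Then I would run the following sequence argument for a fixed $n\ge d$. Take $y_k=T^nx_k\to y$ with $x_k\in D(A^n)$; applying the bounded operator $D^n$ gives $D^ny_k=B^nx_k\in R(B^n)$ and $D^ny_k\to D^ny$, so by the hypothesis $D^ny\in\overline{R(B^n)}=R(B^n)$, say $D^ny=B^nz=T^nD^nz$ with $D^nz\in D(A^n)$. Now apply the identity of (ii) to $y$: $y=T^n\Phi y+D^n\Psi y$. Using that $\Psi$ commutes with $D$ and with $A$, one gets $D^n\Psi y=\Psi D^ny=\Psi T^nD^nz=T^n(\Psi D^nz)\in R(T^n)$, while $T^n\Phi y\in R(T^n)$ with $\Phi y\in D(A^n)$. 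Hence $y=T^n(\Phi y+\Psi D^nz)\in R(T^n)$, which shows $R(T^n)$ is closed; carrying this out for every $n\ge d$ gives the statement.

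The routine verifications are the commutation relations and the domain bookkeeping — for instance that $T^n\Phi$ and $D^n\Psi$ are genuinely bounded everywhere defined operators on $X$ (closed graph theorem, since each is everywhere defined and closed), and that $L_\lambda(t)^k$ pushes $X$ into $D(A^k)\subseteq D(A^n)$ for $k\ge n$. The only real work is step (ii): after the binomial expansion each summand of $\big((\lambda-A)L_\lambda(t)+G_\lambda(t)D_\lambda(t)\big)^{2n-1}$ must be seen to contain either a full factor $T^n$ on the left (the terms with at least $n$ factors $TL$, i.e.\ index $k\ge n$) or a full factor $D^n$ (the terms with at least $n$ factors $G_\lambda(t)D$, i.e.\ $k\le n-1$) — which is exactly why the exponent $2n-1$, rather than $n$, is forced. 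I expect this expansion, together with tracking the domains of the leftover factors, to be the main obstacle, though it is essentially the computation already performed in Proposition \ref{b}.
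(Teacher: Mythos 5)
Your argument is correct and is essentially the proof the paper gives: the same sequence argument (apply $D_\lambda^n(t)$ to $y_p=(\lambda-A)^n x_p\to y$, use closedness of $R\bigl((\int_0^t e^{\lambda s}ds-S(t))^n\bigr)$ to produce $z$ with $D_\lambda^n(t)y=(\int_0^t e^{\lambda s}ds-S(t))^n z$, then recover $y\in R((\lambda-A)^n)$ from the identity $(\lambda-A)^nF_n(t)+D_\lambda^n(t)G_n(t)=I$). The only difference is that you make explicit how that identity is obtained (binomial expansion of the $(2n-1)$-st power of the relation in Lemma \ref{l1}), a step the paper merely asserts.
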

\begin{proof}
 Let $y_p=(\lambda-A)^nx_p\rightarrow y$, as $p\rightarrow \infty$, we show that $y\in R((\lambda-A)^n)$.
 According to lemma \ref{l1} there exists $F_{n}(t)$ and $G_{n}(t)$ two bounded linear operators such that
 $$   (1):   (\lambda- A)^{n} F_{n}(t)+ D^{n}_{\lambda}(t)G_{n}(t)=I$$
 $D^{n}_{\lambda}(t)y_p= D^{n}_{\lambda}(t)(\lambda-A)^n x_p=(\int_0^te^{\lambda s}ds-S(t))^{n}x_p\in R((\int_0^te^{\lambda s}ds-S(t))^n)$
 since $D^{n}_{\lambda}(t)y_p\rightarrow D^{n}_{\lambda}(t)y$, then there exists $z\in X$ such that
 $D^{n}_{\lambda}(t)y=(\int_0^te^{\lambda s}ds-S(t))^{n}z$.
 By $(1)$ :\\$(\lambda-A)^{n}x_p=(\lambda-A)^{2n}F_{n}(t)x_p+ (\int_0^te^{\lambda s}ds-S(t))^{n} G_{n}(t)x_p$
 as $n\rightarrow \infty$, we have :
 \begin{eqnarray*}
            y &=& (\lambda-A)^{n}F_{n}(t)y+ (\int_0^te^{\lambda s}ds-S(t))^{n}G_{n}(t)z \\
              &=& (\lambda-A)^{n}[F_{n}(t)y+ D^{n}_{\lambda}(t)G_{n}(t)z]\in R((\lambda-A)^n)
\end{eqnarray*}
\end{proof}

 \begin{proposition}
Let $(S(t))_{t\geq 0}$ be  a non-degenerate integrated semigroup on $X$ with  generator $A$ and $d\in\mathbb{N}$.

 If $R(\int_0^te^{\lambda s}ds-S(t))+ N((\int_0^te^{\lambda s}ds-S(t))^d)$  is closed in $X$, then $R(\lambda-A )+N((\lambda-A )^d)$ is closed

\end{proposition}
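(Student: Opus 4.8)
The plan is to transport the closedness hypothesis from the integrated semigroup to its generator by composing with $D_\lambda(t)$ and exploiting the resolvent-type identities of Lemma \ref{l1}, in the same spirit as the two preceding propositions. Throughout I abbreviate $B:=\int_0^te^{\lambda s}\,ds-S(t)$ and $C:=\lambda-A$.

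First I would record the two ingredients. Iterating Lemma \ref{l1s}(1) gives $B^nx=C^nD_\lambda(t)^nx$ for every $x\in X$ and every $n$, and $D_\lambda(t)$ leaves $N(C^d)$ invariant, since $C^dv=0$ forces $C^dD_\lambda(t)v=D_\lambda(t)C^dv=0$ by the commutativity of Lemma \ref{l1}(3); together with Lemma \ref{l1s}(4) this yields $D_\lambda(t)N(C^d)\subseteq N(C^d)\subseteq N(B^d)$. Next, from the identity $(\lambda-A)L_\lambda(t)+G_\lambda(t)D_\lambda(t)=\phi_\lambda(t)I$ of Lemma \ref{l1}, normalizing by $\phi_\lambda(t)$ exactly as in the proof of Proposition \ref{b}, I obtain bounded operators $F_1(t),G_1(t)$, with $F_1(t)$ mapping $X$ into $D(A)$ and both commuting with $C$ and with $D_\lambda(t)$, such that $C\,F_1(t)+D_\lambda(t)\,G_1(t)=I$; raising this identity to the $d$-th power and collecting the summands that carry a factor $C$ produces, in the same manner, bounded operators $F(t),G(t)$ with the same commutation properties and $C\,F(t)+D_\lambda(t)^d\,G(t)=I$.

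Then I would take a sequence $y_p=Cx_p+v_p\to y$ with $v_p\in N(C^d)$ and show $y\in R(C)+N(C^d)$. Applying $D_\lambda(t)$ and using Lemma \ref{l1s}(2) gives $D_\lambda(t)y_p=Bx_p+D_\lambda(t)v_p\in R(B)+N(B^d)$ by the first ingredient; as $D_\lambda(t)$ is bounded and $R(B)+N(B^d)$ is closed by hypothesis, $D_\lambda(t)y=Bz+w=C\,D_\lambda(t)z+w$ for some $z\in X$ and some $w\in N(B^d)$. Substituting into the first identity,
$$y=C\,F_1(t)y+G_1(t)D_\lambda(t)y=C\bigl(F_1(t)y+G_1(t)D_\lambda(t)z\bigr)+G_1(t)w,$$
whose first summand lies in $R(C)$ because $F_1(t)y,D_\lambda(t)z\in D(A)$. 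It remains to see $w\in R(C)+N(C^d)$, and here the $d$-th power identity does the work: $w=C\,F(t)w+D_\lambda(t)^dG(t)w=C\,F(t)w+G(t)D_\lambda(t)^dw$, where $C\,F(t)w\in R(C)$ while $D_\lambda(t)^dw\in N(C^d)$ since $C^dD_\lambda(t)^dw=B^dw=0$, hence $G(t)D_\lambda(t)^dw\in N(C^d)$ because $G(t)$ commutes with $C$. Therefore $w\in R(C)+N(C^d)$, so $y\in R(C)+N(C^d)$, which proves that $R(\lambda-A)+N((\lambda-A)^d)$ is closed.

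What I leave as routine bookkeeping are the checks that the operators assembled from $L_\lambda(t),D_\lambda(t),G_\lambda(t)$ are bounded and land in the domains on which the algebra above is legitimate (of the same nature as in the proofs of Proposition \ref{b} and the preceding proposition), and the standing assumption $\phi_\lambda(t)\neq0$ needed to normalize the identities. I expect the genuinely delicate point to be the last step: the hypothesis only yields the weak information $C^dD_\lambda(t)^dw=B^dw=0$, which by itself does \emph{not} put $w$ in $N(C^d)$; it is precisely the identity $C\,F(t)+D_\lambda(t)^dG(t)=I$ that upgrades it to $w\in R(C)+N(C^d)$.
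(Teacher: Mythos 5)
Your proof is correct and follows essentially the same route as the paper's: push the approximating sequence through powers of $D_\lambda(t)$ into the closed subspace $R(B)+N(B^d)$, recover $y$ via the normalized identity $(\lambda-A)^nF_n(t)+D_\lambda^n(t)G_n(t)=I$, and exploit the key observation that $(\lambda-A)^dD_\lambda(t)^dw=B^dw=0$ places $D_\lambda(t)^dw$ in $N((\lambda-A)^d)$. The only difference is bookkeeping: you apply $D_\lambda(t)$ once and invoke the degree-$1$ and degree-$d$ identities separately, whereas the paper applies $D_\lambda(t)^d$ at the outset and resolves everything with a single degree-$2d$ identity.
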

\begin{proof}
Suppose that $R(\int_0^te^{\lambda s}ds-S(t))+ N((\int_0^te^{\lambda s}ds-S(t))^d)$  is closed in $X$.\\
Let $y_n=(\lambda-A)x_n+ z_n\rightarrow y$, as $n\rightarrow \infty$, $z_n\in N((\lambda-A)^d)$\\
$D^{d}_{\lambda}(t)y_n=D^{d}_{\lambda}(t)(\lambda-A)x_n+D^{d}_{\lambda}(t)z_n
\in R(\int_0^te^{\lambda s}ds-S(t))+ N((\int_0^te^{\lambda s}ds-S(t))^d)$, then $D^{d}_{\lambda}(t)y\in R(\int_0^te^{\lambda s}ds-S(t))+ N((\int_0^te^{\lambda s}ds-S(t))^d)$\\
$D^{d}_{\lambda}(t)y=((\int_0^te^{\lambda s}ds-S(t))x+z$ with $z\in N((\int_0^te^{\lambda s}ds-S(t))^d)$\\

$D^{2d}_{\lambda}(t)y=D^{d}_{\lambda}(t)(\int_0^te^{\lambda s}ds-S(t))x+ D^{d}_{\lambda}(t)z$ and  $D^{d}_{\lambda}(t)z\in N((\lambda-A)^d)$.\\ Then
\begin{eqnarray*}
  y &=& (\lambda-A)^{2d}F_{d}(t)y+D_{\lambda}^{2d}(t)G_{d}(t)y \\
    &=& (\lambda-A)^{2d}F_{d}(t)y+G_{d}(t)[D_{\lambda}^{d}(t)(\int_0^te^{\lambda s}ds-S(t))x+ D^{d}_{\lambda}(t)z] \\
    &=& (\lambda-A)[(\lambda-A)^{2d-1}F_{d}(t)y+G_{d}(t)D_{\lambda}^{d-1}(t)]+G_{d}(t)D_{\lambda}^{d}(t)z\in  R(\lambda-A)+N((\lambda-A)^d)
\end{eqnarray*}

\end{proof}
\begin{corollary}\label{a}
Let $(S(t))_{t\geq 0}$ be a non-degenerate integrated semigroup on $X$ with  generator $A$.

 If $R(\int_0^te^{\lambda s}ds-S(t))$ is closed in $X$, then $R(\lambda-A )$ is closed
\end{corollary}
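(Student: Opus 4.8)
The proof runs along the same lines as the three propositions above, specialised to the first power, so I would simply record the short direct argument. Its mechanism is to transport closedness from $B:=\int_{0}^{t}e^{\lambda s}ds-S(t)$ to $\lambda-A$ by composing with the bounded operator $D_\lambda(t)$, which intertwines the two by Lemma \ref{l1s}(1)--(2), together with the factorisation $(\lambda-A)L_\lambda(t)+e^{-\lambda t}D_\lambda(t)=c\,I$ from Lemma \ref{l1}(2), where $c=\int_{0}^{t}\!\int_{0}^{\tau}e^{-\lambda\sigma}\,d\sigma\,d\tau$.

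Concretely, take $y_{p}=(\lambda-A)x_{p}$ with $x_{p}\in D(A)$ and $y_{p}\to y$; one must show $y\in R(\lambda-A)$. Applying $D_\lambda(t)$ and using Lemma \ref{l1s}(2) gives $D_\lambda(t)y_{p}=D_\lambda(t)(\lambda-A)x_{p}=Bx_{p}\in R(B)$; by continuity $D_\lambda(t)y_{p}\to D_\lambda(t)y$, and since $R(B)$ is closed by hypothesis there is $z\in X$ with $D_\lambda(t)y=Bz=(\lambda-A)D_\lambda(t)z$, the last equality by Lemma \ref{l1s}(1), and with $D_\lambda(t)z\in D(A)$. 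Now the factorisation from Lemma \ref{l1}(2) reads $c\,y=(\lambda-A)L_\lambda(t)y+e^{-\lambda t}D_\lambda(t)y$; substituting $D_\lambda(t)y=(\lambda-A)D_\lambda(t)z$ and dividing by $c$ gives $y=(\lambda-A)\big(\tfrac{1}{c}L_\lambda(t)y+\tfrac{e^{-\lambda t}}{c}D_\lambda(t)z\big)$, where the vector in parentheses lies in $D(A)$ since $L_\lambda(t)y\in D(A)$ by Lemma \ref{l1}(2) and $D_\lambda(t)z\in D(A)$ as just noted. Hence $y\in R(\lambda-A)$, which is therefore closed.

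There is no serious obstacle; the only point needing care is the division by the scalar $c$ in the last step, so that one implicitly needs $c\neq0$. For $t>0$ this holds for every $\lambda$ outside a discrete subset of $\mathbb{C}$ (for instance $c=t^{2}/2$ at $\lambda=0$), which is exactly the regime in which the preceding results of this section operate; at an exceptional value of $\lambda$ the factorisation of Lemma \ref{l1}(2) degenerates and would have to be replaced by a separate argument. Granting $c\neq0$, every step above is an immediate consequence of Lemmas \ref{l1s} and \ref{l1}.
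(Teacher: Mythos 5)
Your argument is correct and is essentially the paper's own: the corollary is stated there without a separate proof, being the $n=1$ case of the preceding proposition, whose proof is exactly your scheme of pushing the sequence forward with $D_\lambda(t)$, using closedness of $R\bigl(\int_0^te^{\lambda s}ds-S(t)\bigr)$, and then recovering $y$ via the factorisation $(\lambda-A)L_\lambda(t)+e^{-\lambda t}D_\lambda(t)=\phi_\lambda(t)I$. Your remark that one must have $\phi_\lambda(t)\neq 0$ to divide is a genuine point that the paper silently glosses over throughout this section (e.g.\ also in its Lemma on generalized inverses), so flagging it is to your credit rather than a defect of your proof.
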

\begin{theorem}

Let $(S(t))_{t\geq 0}$  be a non-degenerate integrated semigroup on $X$ with  generator $A$. Then for all $t> 0$:
\begin{center}
$\int_0^t e^{t\sigma_{qF}(A)}ds \subseteq \sigma_{qF}(S(t))$
\end{center}
\end{theorem}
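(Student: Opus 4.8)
The strategy follows the two preceding theorems. Fix $t>0$ and $\lambda\in\mathbb{C}$, write $T:=\int_0^t e^{\lambda s}\,ds-S(t)$, and recall that, in the notation of Lemmas~\ref{l1s} and \ref{l1}, the set $\int_0^t e^{t\sigma_{qF}(A)}\,ds$ stands for $\{\int_0^t e^{\lambda s}\,ds:\lambda\in\sigma_{qF}(A)\}$. It therefore suffices to establish the pointwise implication: if $T$ is quasi-Fredholm, then $\lambda-A$ is quasi-Fredholm. Contrapositively, $\lambda-A$ not quasi-Fredholm (i.e. $\lambda\in\sigma_{qF}(A)$) then forces $T$ not quasi-Fredholm, i.e. $\int_0^t e^{\lambda s}\,ds\in\sigma_{qF}(S(t))$, which is the asserted inclusion.

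So assume $T$ is quasi-Fredholm of degree $d$, so that $\mathrm{dis}(T)=d$, $R(T^n)$ is closed for every $n\ge d$, and $R(T)+N(T^n)$ is closed for every $n\ge d$. I would then verify the three defining conditions for $\lambda-A$ by invoking, in order, the results already proved in this section. First, Proposition~\ref{b} gives $\mathrm{dis}(\lambda-A)\le\mathrm{dis}(T)=d<\infty$. Second, since $R(T^n)$ is closed for all $n\ge d$, the proposition on closedness of the iterated ranges yields $R((\lambda-A)^n)$ closed for all $n\ge d$ (and Corollary~\ref{a} covers the single power $R(\lambda-A)$ when it is needed). Third, applying the proposition on closedness of $R(\,\cdot\,)+N((\,\cdot\,)^k)$ with $k=n$ for each $n\ge d$, the hypothesis that $R(T)+N(T^n)$ is closed for all $n\ge d$ gives $R(\lambda-A)+N((\lambda-A)^n)$ closed for all $n\ge d$. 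Thus $\lambda-A$ meets conditions (1)--(3) of the definition relative to the integer $d$, hence is quasi-Fredholm, and the inclusion follows.

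The one point that needs care — and the only place where I expect anything beyond a mechanical citation of Proposition~\ref{b}, the two range/sum propositions, and Corollary~\ref{a} — is the bookkeeping of the degree. Proposition~\ref{b} delivers only the inequality $\mathrm{dis}(\lambda-A)\le d$, whereas ``quasi-Fredholm of degree $d$'' asks that $\mathrm{dis}(\lambda-A)$ coincide with the index from which the closedness conditions are imposed; when $\mathrm{dis}(\lambda-A)=d'<d$ one would in principle still need $R((\lambda-A)^n)$ and $R(\lambda-A)+N((\lambda-A)^n)$ closed for $d'\le n<d$. In the write-up this is either absorbed by reading the definition as ``there exists an admissible integer $d$'' (being quasi-Fredholm relative to such a $d$ being all that is required for $\lambda-A$ to lie outside $\sigma_{qF}(A)$), or supplied by the standard stability properties of the iterated ranges and of $R+N$ for an operator of finite degree of stable iteration, by which closedness at one level $\ge\mathrm{dis}(\lambda-A)$ propagates to every level $\ge\mathrm{dis}(\lambda-A)$. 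Either way $\lambda-A$ is quasi-Fredholm, which completes the proof.
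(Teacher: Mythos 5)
Your proposal is correct and is essentially the paper's own argument: the paper's proof consists of the single line ``Direct consequence of the three last proposition[s],'' and you have simply written out how Proposition~\ref{b}, the two closedness propositions, and Corollary~\ref{a} combine to verify the three conditions of the quasi-Fredholm definition for $\lambda-A$. Your remark about the degree bookkeeping (that Proposition~\ref{b} only yields $\mathrm{dis}(\lambda-A)\le d$, so one must either read the definition existentially in $d$ or invoke the standard stability of the closedness conditions above the degree of stable iteration) is a point the paper silently glosses over, and your handling of it is sound.
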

\begin{proof}
Direct consequence of the three last proposition.
\end{proof}
\begin{remark}
If $dis(T)=0$,  $T$ to be a Kato operator and by using the  corollary \ref{a}  and proposition \ref{b} we have for all $t> 0$:
$$\int_0^t e^{t\sigma_{K}(A)}ds \subseteq \sigma_{K}(S(t))$$
\end{remark}


\begin{thebibliography}{99}
\bibitem{A}
\textsc{ Arendt, W.,}  \emph{Vector-valued Laplace transforms and Cauchy problems,}  Israel
J. Math. 59 (1987), 327-352.

\bibitem{engel}  \textsc{K. J. Engel and R. Nagel,} \emph{One-Parameter Semigroups for Linear Evolution Equations},
Graduate Texts in Mathematics, vol. 194, Springer-Verlag, New York, 2000.
\bibitem{lab}  \textsc{Labrousse  J.P},  \emph{Les  op\'{e}rateurs  quasi-Fredholm:  une  g\'{e}n\'{e}ralisation  des
op\'{e}rateurs  semi-Fredholm,}  Rend.  Circ.  Math.  Palermo  (2),  XXIX  (1980) 161-258.

\bibitem{Mul}    \textsc{V. M\"{u}ller,} \emph{Spectral Theory of Linear Operators and Spectral Systems in
Banach Algebras} 2nd edition. Oper. Theory Advances and
Applications, vol. 139 (2007).

\bibitem{Pazy}    \textsc{A. Pazy,} \emph{Semigroups of Linear Operators and Applications to Partial Differential
Equations}, Applied Mathematical Sciences, vol. 44, Springer-Verlag,  New York 1983.
\end{thebibliography}
\end{document}